\documentclass{amsart}

\newtheorem{theorem}{Theorem}
\newtheorem{proposition}[theorem]{Proposition}

\newtheorem{corollary}[theorem]{Corollary}
\newtheorem{question}[theorem]{Question}
\newtheorem{problem}[theorem]{Problem}

\numberwithin{equation}{section}
\numberwithin{theorem}{section}

\newcommand{\mb}{\mathbb}
\newcommand{\wh}{\widehat}

\newcommand{\R}{\mathbb{R}}

\newtheorem{namedtheorem}[theorem]{\theoremname}
\newcommand{\theoremname}{testing}
\newenvironment{named}[1]{\renewcommand{\theoremname}{#1}
        \begin{namedtheorem}}
        {\end{namedtheorem}}

\usepackage{amsfonts, amsmath, amsthm, amssymb, epsfig, bm, graphics,
  psfrag, latexsym, chngcntr, color, yfonts, mathtools, subfig, longtable,todonotes,enumitem,tikz-cd,appendix,enumitem,hyperref}
\usepackage[mathscr]{eucal}

\begin{document}
\title{Knot Floer homology of Positive Braids}
\author{Zhechi Cheng}
\address{School of Mathematics and Statistics, Wuhan University, Wuhan, Hubei, China}
\email{\href{mailto:zcheng@whu.edu.cn}{zcheng@whu.edu.cn}}
\author{Matthew Hedden} \address{Department of Mathematics, Michigan
  State University, East Lansing, MI}
\email{\href{mailto:matthew.hedden@gmail.com}{matthew.hedden@gmail.com}}

\begin{abstract} We compute the next-to-top term of the knot Floer homology of any link obtained as the closure of a positive braid, showing in particular that the rank is one for any prime  knot  in this family. As such knots are fibered, it follows that their monodromies are fixed-point free.  We compare the set of positive braids with other classes of knots known to have this property. One such class consists of knots possessing ``diagonal" grid diagrams. We provide an example of such a knot that is not a positive braid, providing an answer to a question of Vance and Kubota. We conclude with a number of problems and questions for future study naturally motivated by our theorem.

\end{abstract}
\maketitle 

\setcounter{tocdepth}{1}
\tableofcontents

\section{Introduction}
Knot Floer homology was introduced  in \cite{OSknot,Rthesis} as a bigraded refinement, for knots and links, of Ozsv\'ath and Szab\'o's singly graded Heegaard Floer homology groups of a homology 3-sphere.  The additional ``Alexander" grading  contains important information about $K$ not seen by the Alexander polynomial, knot Floer homology's graded Euler characteristic. For example, while the  Alexander polynomial bounds the Seifert genus and obstructs fiberedness, knot Floer homology determines them \cite{OSgenusbound,GhigginiGenus1,Ni06}. In these and most other earlier results about the geometric meaning of knot Floer homology, attention was focused solely on the  \textit{top term} $\wh{HFK}(K,g(K))$; that is, the non-trivial subgroup of knot Floer homology with highest Alexander grading. 

Recent years have given rise to results suggesting similarly rich information contained in the  \textit{next-to-top term} $\wh{HFK}(K,g(K)-1)$.  Thus far, these pertain exclusively to the class of fibered knots, highlighted by work of Ni \cite{Nifp2} which shows that if the rank of the next-to-top term of a fibered knot is $r$, then its monodromy is freely isotopic to a diffeomorpism with at most $(r-1)$ fixed-points, and  of Ghiggini-Spano \cite{GS22}  proving that this is typically  minimal among area-preserving diffeomorphisms  in its isotopy class. One should view this as a refinement of the fact that the second highest coefficient of the Alexander polynomial is one less than the Lefschetz number of the monodromy.

It follows that fibered knots with rank one next-to-top term have fixed point free monodromies, and it is natural to ask which knots have this property. In this paper, we establish that every prime knot arising as the closure of a braid word written entirely in the positive Artin generators is among them.  We refer to such knots as \textit{positive braid} knots.  A foundational result of Stallings \cite{Stallings} shows that (non-split) positive braid closures are fibered. 

In fact, we compute the next-to-top term of the knot Floer homology of any positive braid {\em link} $L$. The answer is stated in terms of the following quantities:
\vskip5pt
$\ |L|\ :=\text{the number of components of } L$, 
\vskip3pt
$g(L):=\underset{\Sigma}{\text{min}}\{\frac{|L|-\chi(\Sigma)}{2} |\ \Sigma \text{ is a (possibly disconnected) Seifert surface for } L\}$,rtyu
\vskip3pt
$s(L):=\text{ the number of split factors of }L$,
\vskip4pt
$p(L):=\text{ the number of prime factors for }L$. 
\vskip5pt
\noindent We refer to $g(L)$ defined above as the {\em genus} of $L$, and clarify that the number of prime factors of $L$ is the sum, over the split components $L_i$, of the number of non-trivial connected summands of $L_i$. In particular $p(\text{unlink})=0$.

\begin{theorem}\label{main} For $L$  a positive braid link, the next-to-top term of knot Floer homology satisfies $\wh{HFK}(L,g-1)\cong\mathbb{F}^{p(L)+|L|-s(L)}[-1] \otimes (\mathbb{F}[0]\oplus\mathbb{F}[-1])^{\otimes s(L)-1}$. 
\end{theorem}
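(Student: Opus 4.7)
The plan is to reduce the statement to the prime non-split case via Künneth-type formulas for knot Floer homology, and then to pin down the rank in that core case by combining Ito's Euler characteristic computation with a concentration argument in Maslov grading.

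For the reduction, I would first apply the connected-sum Künneth $\wh{HFK}(L_1\#L_2)\cong\wh{HFK}(L_1)\otimes\wh{HFK}(L_2)$, which is additive in genus and preserves the top-class identification $\wh{HFK}(L_i, g_i)\cong\mathbb{F}_{(0)}$. Picking off Alexander grading $g-1$ then writes $\wh{HFK}(L, g-1)$ as a direct sum of next-to-top contributions from each prime summand, accounting for the $p(L)$ dependence. A split-union Künneth formula introduces a $V=\mathbb{F}[0]\oplus\mathbb{F}[-1]$ factor per split, producing the $(\mathbb{F}[0]\oplus\mathbb{F}[-1])^{\otimes s(L)-1}$ tensor factor. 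Since positive braid links are closed under both connected sum (stacking braids) and split union (placing braids side by side), this reduction stays within the theorem's hypothesis, leaving the core claim: for a prime non-split positive braid link $L$, $\wh{HFK}(L, g-1)\cong\mathbb{F}^{|L|}[-1]$.

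For this core case, I would first extend Ito's \cite{Ito22} computation of the second coefficient of the Alexander polynomial from prime positive braid knots to prime non-split positive braid links. Via the Ozsv\'ath--Szab\'o relation $\sum_a\chi(\wh{HFK}(L,a))T^a = \Delta_L(T)(T^{1/2}-T^{-1/2})^{|L|-1}$, Ito's combinatorial counting method then yields $\chi(\wh{HFK}(L, g-1))=\pm|L|$. Combined with inequality (\ref{ineqt:rank1}), this gives $\mathrm{rank}\,\wh{HFK}(L, g-1)\geq|L|$. To upgrade this bound to equality, I would show that $\wh{HFK}(L, g-1)$ is concentrated in Maslov grading $-1$. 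My approach is to use a Heegaard diagram adapted to the Bennequin fiber surface (one disk per strand, one positive band per crossing), enumerate the generators in Alexander grading $g-1$, and verify that the positivity of all bands constrains each such generator to Maslov grading $-1$.

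The hardest step is this concentration argument: showing that no generator of $\wh{HFK}(L, g-1)$ appears in an even Maslov grading, so the Euler characteristic bound is attained. This requires a careful combinatorial analysis of how domains contribute to Maslov gradings in a positive-braid Heegaard diagram. An inductive approach, using a skein exact sequence to add one crossing at a time and tracking the evolution of the rank and Maslov gradings of the next-to-top term, may be cleaner than a direct diagrammatic enumeration and may allow the concentration to be deduced from a simple base case together with a controlled contribution at each induction step.
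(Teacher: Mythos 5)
Your overall skeleton---reduce to the prime non-split case, take the rank from Ito's Euler characteristic computation, and prove the group is concentrated in Maslov grading $-1$---matches the paper's strategy, and your second suggested route for the concentration (induction via skein exact sequences, adding one crossing at a time) is the route the paper actually takes. But the proposal stops exactly where the real content begins: you never supply the argument that kills the Maslov-grading-$0$ part of $\wh{HFK}(L,g-1)$. Your first suggested route, enumerating generators in a Heegaard diagram adapted to the fiber surface and checking that positivity forces them all into Maslov grading $-1$, fails as stated: the paper's example in Figure \ref{fig:pbexp} (the positive braid $\sigma_1^2\sigma_2^3\sigma_1\sigma_2^4$ representing $K_{10_{139}}$) exhibits a Kauffman state in bigrading $(0,g-1)$, so generators in Maslov grading $0$ do occur in Alexander grading $g-1$ and must cancel in homology; no generator count can detect that cancellation.

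The missing idea is the key non-formal input in the skein triangle. Resolving at one of a pair of adjacent crossings $\sigma_i^2$, one has $\wh{HFK}_0(L_-,g-1)\cong\mathbb{F}$ generated by the top class of $L_-$ (since $g(L_-)=g-1$), and one must show the map $F:\wh{HFK}_0(L_+,g-1)\to\wh{HFK}_0(L_-,g-1)$ vanishes, so that $\wh{HFK}_0(L_+,g-1)$ injects into the $L_0$-term and dies by induction. The paper proves this (Corollary \ref{cor:exact}) by showing the connecting map $G$ out of $\wh{HFK}_0(L_-,g-1)$ is injective: because $\tau(L_-)=g(L_-)$ for the positive link $L_-$, the top class survives to a generator of $\wh{HF}(\#_n S^1\times S^2)$, and the induced map on the closed-manifold level sends that class to a nonzero element. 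Without this $\tau$/contact-class input (or a substitute), your induction has no mechanism to start the cancellation in Maslov grading $0$, and the Euler-characteristic lower bound $\mathrm{rank}\,\wh{HFK}(L,g-1)\ge |\chi|$ cannot be upgraded to equality. You would also need to treat the degenerate resolutions where $L_-$ becomes split or composite (when $\sigma_i$ appears only two or three times in the word), which the paper handles as separate cases.
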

\noindent In the above, we use $[i]$ to denote the Maslov grading shift of $i$, and remark  that there are two natural  conventions for the Maslov grading of the knot Floer homology of a link; we use the one taking integral values from \cite{OSlink} rather than the one that takes half-integral values found in \cite{OSknot}.  
The theorem can be viewed as a categorification of Ito's calculation of the penultimate coefficient of the Alexander polynomial of a positive braid closure \cite[Theorem 2]{Ito22}. 

Combined with Ni's result, we obtain the desired conclusion about fixed points: 

\begin{corollary}\label{cor:fp} Let $K$ be a prime positive braid knot with fiber surface $F$ and monodromy $\varphi$. Then $\varphi$ is freely isotopic to a diffeomorphism with no fixed point.
\end{corollary}

Theorem \ref{main} also yields a new proof of Krishna's result:
\begin{corollary}\cite[Theorem 1.5]{Krishna}\label{cor:cable} When $n\ge 2$, the $(n,1)$-cable of a knot $K$ is a positive braid knot if and only if $K$ is the unknot. 
\end{corollary}
\noindent It would be interesting to use our result in conjunction with cabling formulae \cite{HeddenCabling,LOT, HomThesis,HanselmanWatsonCabling,chen2024lspacesatelliteoperatorsknot} for knot Floer homology to determine when the $(n,m)$ cable of a knot is a positive braid as a function of $m$ and $K$. 

Indeed, as fibered ``strongly-quasipositive" braid closures are characterized by  \cite{Ni07,H10} in terms of the  top group of knot Floer homology and the canceling differential acting upon it (see \ref{prop:cancel} below), one might hope that our additional obstruction to a link being a (genuinely) positive braid closure provides a complete characterization of this class. While this holds through 12  crossing knots, the 13 crossing knot $K_{13n_{4639}}$ is a fibered strongly-quasipositive knot with next-to-top term consistent with Theorem \ref{main}, yet is not a positive braid closure \cite{BLL18}. One may further speculate whether our result picks out the positive braids from among the fibered positive knots. This, too, turns out to be false; using SnapPy \cite{snappy} we found the lowest crossing fibered positive, but not braid positive, knot whose next-to-top Floer homology is $\mathbb{F}[-1]$, is the 14 crossing  $K_{14n_{5644}}$.

Lastly, it bears mentioning that there are other classes of fibered knots whose next-to-top term is of rank $1$; for instance, \textit{L-space knots} \cite[Corollary 9]{HW18}, which are knots with a surgery to a manifold with simplest possible Heegaard Floer homology, and prime \textit{diagonal knots}, which are knots represented by grid diagrams whose $O$-markings all lie on the diagonal \cite[Corollary 1.9]{kubota25}. While neither of these latter two classes contain the other (see \cite[Theorem 1.17]{kubota25}),  Vance and Kubota asked whether the set of positive braid knots coincides with the set of diagonal knots \cite[Question 1.4]{kubota25}. The aforementioned 14 crossing knot provides a negative answer: 

\begin{proposition}
\label{thm:diagonal} There are diagonal knots that are not positive braids. 
\end{proposition}

 \noindent Figure \ref{fig:grid} depicts a diagonal grid diagram for $K_{14n_{5644}}$. Note that the knot $K_{13n_{4639}}$ is an L-space knot but not braid positive and the knot $K_{10_{139}}$ is a positive braid knot but not an L-space knot \cite{BLL18}, \cite{Knotinfo}. We suspect that there are examples of positive braids which are not diagonal, and further that all eight regions of the Venn diagram for these three classes of knots are populated.

\subsection*{Organization} In the next section we review positive braids, and their features used in the proof of the main theorem. The latter is proved in Section \ref{sec3}, which begins with a discussion of algebraic structures present in knot Floer homology that are essential to our method of attack, chief among them being the skein exact triangle and its interaction with a canceling differential acting on the knot Floer homology groups.  In fact we prove a stronger theorem than what is stated above, which determines the action of the canceling differential on the next-to-top term. The final section highlights a number of questions and future directions which our work naturally motivates.

\subsection*{Acknowledgement} The authors thank Dongtai He, Tetsuya Ito, Hajime Kubota, Robert Lipshitz for many helpful discussions and comments. Zhechi Cheng was partially supported by NSFC grant No. 12401087.  Matt Hedden gratefully acknowledges support from NSF DMS-2104664,  a Simons Fellowship, and  Cambridge University for a hospitable environment provided during Spring 2025, in which some of this work was done.

\section{Positive Braids}\label{sec2}
In this section, we review some basic properties of positive braid links. Let $B_n$ be the Artin braid group on $n$ strands, with generators $\sigma_1,\ldots,\sigma_{n-1}$. A braid is called {\em positive} if it can be expressed by a word using only the positive generators $\sigma_1,\ldots,\sigma_{n-1}$ and not their inverses $\sigma^{-1}_1,\ldots,\sigma^{-1}_{n-1}$.   By joining the ends of each strand, a braid $\beta\in B_n$ can be converted to a link $L_\beta$, referred to as {\em the closure} of  $\beta$. A link will be called a {\em positive braid link} if it is isotopic to the closure of a positive braid.   

The natural diagram associated to a positive braid closure has a number of useful features.  For instance, one can use it to easily determine if the underlying link type is split or composite.  To understand this, recall that a link diagram $D\subset \R^2$ is {\em split} if there is circle embedded in the complement of $D$ which separates it into multiple components.  Similarly, we say a diagram $D$ is {\em  composite} if  there is a circle intersecting $D$ in two points and such that the piece of $D$ on either side of the circle is not a trivial arc i.e. an arc without double points.  We have the following theorem of Cromwell:

\begin{theorem}\cite{Cromwell}\label{split} Let $L$ be a positive braid link and $\hat\beta$ denote the natural diagram of any closed positive braid representing $L$. Then $L$ is split  if and only if the link diagram $\hat\beta$ is split.  Similarly, if $\hat\beta$ has no nugatory crossings (a double point of the diagram which separates it) and is not split, then $L$ is composite if and only if $\beta$ factors as $\beta=\beta_1\circ \beta_2$ for positive braids $\beta_1$ and $\beta_2$ on the first $k$ and last $n-k$ strands, respectively. In particular, the diagram of $\hat\beta$ is composite with summands $\hat\beta_1$ and $\hat\beta_2$.
\end{theorem}

\noindent Of course a link is split or composite if it possesses a diagram which is--the content here is that {\em any} positive braid diagram will have these features.  For this reason positive braid links are said to be \textit{visually split} and \textit{visually prime} respectively, a feature which they share with several other classes of diagrammatically defined links.  For instance, the above theorem holds for alternating links by work of Menasco \cite{Menasco}, for positive links by Ozawa  \cite{Ozawa}, and for homogeneous braid links by Feller and Lewark  \cite{FLR}. 

 Theorem \ref{split} allows us to assume that a prime, non-split, positive braid link is represented by a braid in which every generator appears at least twice. Indeed, if $\sigma_i$ does not appear, then the diagram splits. If $\sigma_i$ appears only once then the diagram is composite, decomposed into the connected sum of two diagrams. If one of these diagrams represents an unknot, so that the closure is still prime, we can simply remove $\sigma_i$ and the subword of the braid corresponding to the unknot.  This results in a positive braid whose closure has the same isotopy class, but with strictly smaller braid index.  Repeating this argument as necessary, one eventually arrives at a positive braid where each generator appears at least twice. Perhaps surprisingly, we can moreover assume that there is a generator appearing twice {\em in succession}:

\begin{proposition}\label{prop:adjacentgenerator} Let $L$ be a positive braid link which is not the unlink. Then there is positive braid representative of $L$ of minimal braid index of the form $\sigma_i^2\beta$ for some $i$ and word $\beta$. 
\end{proposition}
\proof This can be proved by a straightforward inductive argument using the braid relation, and appears in numerous places; for instance \cite[Lemma 2]{Buskirk} \cite[Lemma 2.1]{FranksWilliams} \cite[Claim 4.4]{Nakamura}  \cite[Lemma 1]{BaaderDehornoy} \cite[Theorem 8]{Ito22}.
\qed

Our analysis of positive braids will hinge on inductive arguments involving links related by the \textit{oriented skein relation}.  To describe this, and to establish notation, let $L_+$ be a link represented by a diagram that contains a distinguished positive crossing, $L_-$ be the link defined by the  diagram obtained by changing this crossing, and $L_0$ be the link defined from the diagram gotten by  resolving the crossing in the unique  orientation-respecting manner, see Figure \ref{fig:skein}.
\begin{figure}
\centering
\includegraphics[width=0.6\linewidth]{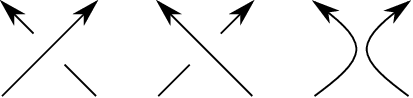}
\caption{The skein relation: from left to right, we have $L_+$, $L_-$ and $L_0$.}
\label{fig:skein}
\end{figure} In these terms, the utility of Proposition \ref{prop:adjacentgenerator} becomes clear. Indeed, if $L_+$ is the closure of a positive braid $\sigma_i^2\beta$, and we consider a skein relation where one of the two adjacent crossings coming from $\sigma_i^2$ is distinguished (see Figure \ref{fig:adjacent}), then $L_0$ will be the closure of $\sigma_i\beta$ and $L_-$ will be isotopic to the closure of $\beta$. All three of these braids are positive.  This allows us to reduce the length of the braid using skein relations,  whilst staying strictly within the realm of braid braids. 

\begin{figure}
\centering
\includegraphics[width=0.6\linewidth]{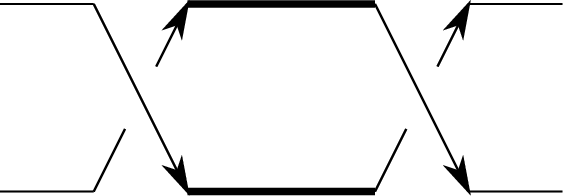}
\caption{The two crossings are adjacent if there is no other crossing in the thickened segments.}
\label{fig:adjacent}
\end{figure}

It will be useful to note that in the situation considered above, the genera of the three positive braid closures satisfy:
\begin{equation}\label{eqt:genus} g(L_+)=g(L_-)+1=g(L_0)+\delta,
\end{equation}
where $\delta$ is $0$ when $L_0$ has more components than $L_\pm$, and $1$ otherwise. Indeed, this relation amongst the genera of $L_+,L_-$ and $L_0$ holds for adjacent positive crossings in any positive  diagram of $L_+$, as the genus of a knot is realized by the Seifert surface provided by Seifert's algorithm applied to any positive  diagram \cite[Corollary 4.1]{MR1002465}.

\section{Knot Floer Homology of Positive Braids}\label{sec3}
We start by reviewing some key properties of knot Floer homology. For knots $K$ in $S^3$, we associate a Maslov-graded chain complex $\wh{CFK}(K)$ over $\mb{F}=\mb{Z}/2\mb{Z}$, filtered by an additional Alexander-grading. The associated-grade homology with respect to the latter is called the knot Floer homology $\wh{HFK}(K)$, defined in \cite{OSknot} and \cite{Rthesis}, where it is shown to be a knot invariant that categorifies the Alexander polynomial. 

There are a several ways to generalize this theory to links. For the present purposes, we follow \cite{OSknot} and consider the \textit{knotification} of a link $L$ in $S^3$. This is a well-defined knot $\kappa(L)$ in the connected sum of $(|L|-1)$ copies of $S^1\times S^2$, obtained by joining the components of $L$ by bands running through the essential 2-spheres in the summands. As knot Floer homology is defined for null-homologous knots $K$ in any closed $3$-manifold $Y$,  one can therefore define $\wh{HFK}(S^3,L)$ to be $\wh{HFK}(\#S^1\times S^2, \kappa(L))$. 

Key to our computation is an exact triangle relating the knot Floer homology of links in Figure \ref{fig:skein}:

\begin{named}{Skein Exact Triangle}\cite{OSknot,OSes}\label{prop:es} If $L_+$ has one fewer component than $L_0$, then there is an exact sequence:
\begin{equation} \ldots\to\wh{HFK}_m(L_+,a)\overset{f_a}\to \wh{HFK}_m(L_-,a)\overset{g_a}\to \wh{HFK}_{m-1}(L_0,a)\overset{h_a}\to\dots.
\end{equation}
 If $L_+$ has one more component than $L_0$, we have
\begin{equation} \ldots \to\wh{HFK}(L_+,a)\to \wh{HFK}(L_-,a)\to \wh{HFK}_{m-1}(L_0\#B,a)\to\ldots,
\end{equation}
 where $B$ is the genus one fibered knot with identity monodromy.  Equivalently, $B\subset \#^2S^1\times S^2$ is the knot obtained by doing zero surgery on two-components of the Borromean rings.
 \end{named}

 To understand the maps in the exact sequences, we will exploit an additional structure on knot Floer homology: 

\begin{named}{Canceling Differential}\cite{OSknot} \label{prop:cancel} There is an endomorphism $\partial_L$ of the knot Floer homology group $\oplus_{m,a}\wh{HFK}_m(L,a)$ whose restriction to each summand is \[\partial_L|_{\wh{HFK}_m(L,a)}: \wh{HFK}_m(L,a) \rightarrow \underset{a'<a}\oplus\wh{HFK}_{m-1}(L,a'),\]
and which satisfies $\partial_L\circ \partial_L = 0$.  The homology of the resulting chain complex is:
\[ H_*(\wh{HFK}(L),\partial_L) \cong (\mathbb{F}[0]\oplus\mathbb{F}[-1])^{\otimes |L|-1}.\]
\end{named}

One should view the homology as the (shifted) Floer homology of  the connected sum of $S^1\times S^2$'s in which the knotification of $L$ is embedded.  The canceling differential arises from the definition of the knot Floer homology groups as the homology of the associated graded complex of the Alexander filtration.  While one could alternatively work with the spectral sequence associated to this filtration, the canceling differential perspective is often simpler.  In particular, the maps in the skein exact triangle can be viewed as the Alexander grading-preserving components of (filtered) chain maps, with respect to the canceling differential. More precisely, for each $m$ there are maps:

\[ f: \underset{a}\oplus\wh{HFK}_m(L_+,a) \rightarrow \underset{a}\oplus\wh{HFK}_{m}(L_-,a)\]
which satisfy $f \partial_{L_+}=\partial_{L_{-}} f$. This chain map is filtered, in the sense that it respects the Alexander grading: \[\mathrm{Im}(f|_{\wh{HFK}_m(L_+,a)})\subset \underset{a'\le a}\oplus\wh{HFK}_{m}(L_-,a').\] The components of $f$ which preserve the Alexander-grading are exactly the maps $f_a$ appearing in the skein exact sequence.   There are, likewise, chain maps $g$ and $h$ refining the maps along the other two edges in the exact triangle.   

Much of the utility in this structure lies in the fact that the  maps induced on homology, which we denote $F,G,H$, are simply those appearing in the surgery exact triangle relating the (shifted) Floer homology of $\infty,-1$ and $0$-surgeries along an unknot in the connected sum of $|L_+|-1$ copies of $S^1\times S^2$. In particular, if we let $V= (\mathbb{F}[0]\oplus\mathbb{F}[-1])^{\otimes{|L_+|-1}} $, then the maps on homology with respect to the canceling differential split as:
\begin{equation}\label{eq:surgery} 0 \to V\overset{G}\hookrightarrow   V[-1]\oplus V[0] \overset{H}\twoheadrightarrow  V\overset{F}\to 0,
\end{equation}
where $G$ maps $V$ into the summand $V[-1]$ via an isomorphism shifting the Maslov degree down by $1$,  and $H$ maps $V[0]$ to $V$ via a grading preserving isomorphism.  We note that while the statement of the skein sequence depends on whether $|L_+|=|L_-|$ is greater or less than $|L_0|$, the groups and maps in  \eqref{eq:surgery} do not.  This is a consequence of the fact that the ambient manifold of the genus one fibered knot with identity monodromy is $\#^2S^1\times S^2$.
In fact, the knot Floer homology of $L_0\#B$ is simply the tensor product of $\wh{HFK}(L_0)$ with the bigraded vector space $J=\mathbb{F}[0,1]\oplus \mathbb{F}^2[-1,0]\oplus \mathbb{F}[-2,-1]$, endowed with the trivial (identically zero) canceling differential. This is a consequence of a general K{\"u}nneth type theorem for the Floer homology of connected sums and disjoint unions:

\begin{named}{Product Formulae}\cite[Theorem 7.1]{OSknot}\label{PF}
\[ \wh{HFK}(Y_1\#Y_2,L_1\#L_2)\cong \wh{HFK}(Y_1,L_1)\otimes\wh{HFK}(Y_2,L_2)\]
\[\wh{HFK}(Y_1\#Y_2,L_1\sqcup L_2)\cong \wh{HFK}(Y_1,L_1)\otimes\wh{HFK}(Y_2,L_2)\otimes (\mathbb{F}[0]\oplus\mathbb{F}[-1]). \]

\end{named}
\noindent We remark that while the connected sum of links depends on choices of components, the above formula holds for {\em any} such choices. We further note that the tensor products are taken in the bigraded sense, and equate the canceling differentials on the left-hand side with the Leibniz product of those on the right:  
\begin{equation}\begin{split} \label{Leibniz}
\partial_{L_1}\otimes 1_{L_2} &+ 1_{L_1}\otimes \partial_{L_2},\\
\partial_{L_1}\otimes 1_{L_2}\otimes 1 &+ 1_{L_1}\otimes \partial_{L_2}\otimes 1.
\end{split}
\end{equation} Note that the differential for the split union case given in the second row is indeed the Leibniz product, when we endow the vector space $\mathbb{F}[0]\oplus\mathbb{F}[-1]$  with the zero differential.

As a final preparatory lemma, we compute the top group of the knot Floer homology of a positive braid link:
\begin{proposition}\cite{Stallings,OScontact}\label{top} If $L$ is a positive braid link, then $\wh{HFK}(L,g)=(\mathbb{F}[0]\oplus\mathbb{F}[-1])^{\otimes{s(L)-1}}$. The canceling differential $\partial_L$ acts on the top group trivially, i.e. every element is a cycle.
\end{proposition}
\proof We first address the non-split case, when $s(L)=1$, noting that by convention $(\mathbb{F}[0]\oplus\mathbb{F}[-1])^{\otimes{0}}=\mathbb{F}[0]$.  A non-split positive braid link is fibered \cite[Theorem 2]{Stallings}, which implies $\wh{HFK}(L,g)\cong \mathbb{F}$ \cite[Theorem 1.1]{OScontact}.  Since the knot Floer homology vanishes in Alexander gradings greater than $g$, and the canceling differential strictly lowers it, a generator of this group is a Maslov graded zero cycle if and only if its homology class generates the Floer homology of $\#^{|L|-1}S^1\times S^2$ in highest Maslov grading.  This, in turn is equivalent to $\tau_{top}(L)=g$, which was proved for fibered positive braid links in \cite[Theorem 5.22]{HR22} (cf. \cite{Livingston,Cavallo}).    The general case follows from the product formula for split links, noting that each of the $s(L)$ split components will be a non-split positive braid link. 
\qed

\vspace{0.07in}
\noindent With the set-up  in place, we turn to the proof of Theorem \ref{main}. We actually prove the following stronger result:

\begin{theorem}\label{mainprime} For $L$  a positive braid link, the next-to-top term of knot Floer homology satisfies { \[ \wh{HFK}(L,g-1)\cong\mathbb{F}^{p(L)+|L|-s(L)}[-1] \otimes (\mathbb{F}[0]\oplus\mathbb{F}[-1])^{\otimes s(L)-1}.\]} 

\noindent With respect to the above tensor product, the canceling differential $\partial_L$ restricted to this group splits as $\partial\otimes 1$, where  $\partial$ has rank $p(L)$. 
\end{theorem}
\noindent In particular, the rank of the restriction of  $\partial_L$ to the next-to-top term has rank $2^{s(L)-1}\cdot p(L)$.
\proof Given a positive braid link $L$, consider any diagram for $L$ arising from the closure of a positive braid representative.  We induct on the number of crossings in such a diagram, which we denote by $w$. The base case, where $w=0$, corresponds to an unlink $U$ of $n$ components, where $n$ is the braid index. We have $g(U)=0$, and the knot Floer homology $\wh{HFK}(U,-1)$ vanishes, by direct calculation (or the fact that knot Floer homology detects genus). This agrees with the assertion of Theorem \ref{mainprime}, since the exponent $p(L)+|L|-s(L)$ of the $\mathbb{F}[-1]$ summand is zero ($=0+n-n$).

Assume, then, that the theorem has been proved for positive braid closures with fewer than $w$ crossings, and consider a diagram for $L$ with $w$.  We may assume none of these crossings are nugatory, since they could be removed with a half twist that results in a positive braid of fewer crossings. We first reduce to the case that $L$ is both prime and non-split.  Both reductions will follow from the inductive hypothesis, together with the Product Formulae \ref{PF}.
\vspace{0.05in}

\noindent{\bf Reduction to prime, non-split links:} We begin by treating the case of connected sums of non-split links; that is,  $L=L_1\#L_2$ where  $s(L)=s(L_i)=1$. Cromwell's Theorem \ref{split} implies the braid itself is reducible, factoring as a product $\beta_1(\sigma_1,\ldots, \sigma_{k-1})\beta_2(\sigma_k,\ldots,\sigma_{n-1})$ of two positive braids on the first $k$ and last $n-k$ strands.  The inductive hypothesis implies that Theorem \ref{mainprime} holds for each of $L_1=L_{\beta_1}$ and $L_2=L_{\beta_2}$.
Applying the product formula \ref{PF} for connected sums and Proposition \ref{top}, we compute the next-to-top term of $L_1\# L_2$:
 \begin{align*}
    \wh{HFK}(L_1 &\# L_2,g_1+g_2-1)\\&\cong (\wh{HFK}(L_1,g_1)\otimes\wh{HFK}(L_2,g_2-1))\oplus(\wh{HFK}(L_1,g_1-1)\otimes\wh{HFK}(L_2,g_2))\\
    &\cong \mathbb{F}^{p(L_2)+|L_2|-1}[-1]\oplus \mathbb{F}^{p(L_1)+|L_1|-1}[-1].
\end{align*}
Noting that $p(L_1\#L_2)=p(L_1)+p(L_2)$ and $|L_1\#L_2|=|L_1|+|L_2|-1$, we see that the next-to-top term of $L_1\#L_2$ satisfies Theorem \ref{mainprime} as well. The Leibniz formula for the canceling differential \ref{Leibniz} implies that the rank of its restriction to the next-to-top term is additive under connected sums.  Hence, by induction, the differential has the form asserted in the theorem. 

We now turn to the split case, with $s(L)>1$. Cromwell's result implies that the positive diagram considered is split by circles in the plane into the disjoint union of positive braid diagrams for non-split links $L_i$, $i=1,\ldots, s(L)$. We assume that all $L_i$ have crossings, because the product formula for split links implies that the validity of Theorem \ref{mainprime} is preserved upon taking the disjoint union of an unlink with any link satisfying its conclusion. The product formula also identifies the next-to-top term of $L_1\sqcup\ldots\sqcup L_{s(L)}$ with the tensor product of  the next-to-top term of a non-split link $L_1\#\ldots\#L_{s(L)}$ and $({\mathbb{F}[0]\oplus\mathbb{F}[-1]})^{s(L)-1}$. Moreover, under this identification the canceling differential is given by $\partial_{L_1\#\ldots\#L_{s(L)}}\otimes 1$,  where $1$ denotes the identity map on the $({\mathbb{F}[0]\oplus\mathbb{F}[-1]})^{s(L)-1}$ factor. On the other hand, we can see that the exponents of the $\mathbb{F}[-1]$ factors in Theorem \ref{mainprime} agree for the connected sum and disjoint union: 
\begin{align*}
    p(\#L_i)&+|\#L_i|-s(\#L_i)\\&=\sum_{i=1}^{s(L)}p(L_i)+\big((\sum_{i=1}^{s(L)}|L_i|)-(s(L)-1)\big)-1\\&=p(L)+|L|-s(L).
\end{align*}
Since we showed that the next-to-top term of the non-split link $L_1\#\ldots\#L_{s(L)}$ satisfies Theorem \ref{mainprime}, the above calculation shows that it holds for $L_1\sqcup\ldots\sqcup L_{s(L)}$ as well. 

 We will therefore assume that $L$ is a non-trivial link which is non-split and prime; that is, $s(L)=1$ and $p(L)=1$.   We wish to prove that $\wh{HFK}(L,g-1)\cong\mathbb{F}^{|L|}[-1]$, and that $\partial_L$ has rank one when restricted to this group.  According to Proposition \ref{prop:adjacentgenerator}, we can assume that the positive braid representing $L$ is of the form $\sigma_i^2\beta$.  We distinguish one of the adjacent positive crossings coming from $\sigma_i^2$, and consider the resulting skein exact sequence with $L_+=L$, $L_-$ isotopic to the closure of $\beta$, and $L_0$ being the closure of $\sigma_i\beta$. There are three cases to consider: $(1)$ $\sigma_i$ appears more than once in $\beta$; $(2)$ $\sigma_i$ appears once in $\beta$; $(3)$ $\sigma_i$ does not appear in $\beta$. 
\vspace{.07in}

\noindent{\bf Case 1:} Here, both $L_-$ and $L_0$ will be non-split and prime, by Theorem \ref{split}.  We consider the skein exact triangle, restricted to Alexander grading $a=g(L_+)-1=g(L_-)$, so from Proposition  \ref{top} we know $\wh{HFK}(K_-,a)\cong \mb{F}[0]$. There are two subcases, depending on whether $|L_0|>|L_+|$ or not. If $|L_0|>|L_+|$, Equation \ref{eqt:genus} shows that $a=g(L_0)-1$. The skein exact triangle \ref{prop:es} in this Alexander grading yields: 
\begin{align}\begin{split}\label{eq:es} \ldots\to\wh{HFK}_1(L_+,a)&\overset{f_a}\to 0\overset{g_a}\to \wh{HFK}_{0}(L_0,a)\overset{h_a}\to \\
\to\wh{HFK}_0(L_+,a)&\overset{f_a}\to \mb{F}\overset{g_a}\to \wh{HFK}_{(-1)}(L_0,a)\overset{h_a}\to\\
\to\wh{HFK}_{(-1)}(L_+,a)&\overset{f_a}\to 0\overset{g_a}\to \wh{HFK}_{(-2)}(L_0,a)\overset{h_a}\to\ldots.
\end{split}
\end{align}
\normalsize All the other terms in the middle column are $0$, which implies $\wh{HFK}_m(L_+,a)\cong\wh{HFK}_m(L_0,a)$ for $m\ne0,-1$, and all of these groups are $0$ by our inductive hypothesis.  We claim that the $\mathbb{F}$ in the middle column injects into $\wh{HFK}_{(-1)}(L_0,a)$ under  $g_a$.  Indeed, by Proposition \ref{top} the $\mathbb{F}$ generates the (rank one) subgroup of $H_*(\wh{HFK}(L_-),\partial_{L_-})$ in (highest) Maslov grading zero.  On the other hand, the inductive hypothesis shows  that $\wh{HFK}_{(-1)}(L_0,a)\cong \mb{F}^{|L_0|}$ and that ker$(\partial_{L_0})$ is an ${|L_0|}-1$ dimensional subgroup, which therefore generates  $H_{(-1)}(\wh{HFK}(L_0),\partial_{L_0})$.   Now the map $g_a$ is the highest order component, with respect to the Alexander grading, of a chain map $g$ whose map on homology is the $G$ from Equation \ref{eq:surgery}.  This latter map is injective, and hence maps the $\mathbb{F}$ in grading zero into $H_{(-1)}(\wh{HFK}(L_0),\partial_{L_0})$.  Since this latter subgroup is supported entirely in Alexander grading $a$, it follows that $g_a$ must be injective on $\mathbb{F}[0]$ as well.

Having established injectivity of $g_a$ exactness of Equation \ref{eq:es} shows that $f_a$ is zero, $h_a$ is surjective, and hence $\wh{HFK}_{(-1)}(L_+,a)\cong \wh{HFK}_{(-1)}(L_0,a)/\mathrm{Im}(g_a)\cong \mb{F}^{|L_+|}$, thus establishing the first part of the theorem for $L_+$. 

We turn to the claim about the canceling differential.   Let $\{x,y_1,...,y_n,z\}$ be a basis for $\wh{HFK}_{(-1)}(L_0,a)$ such that $x=\mathrm{Im}(g_a)$, $\partial_{L_0}(y_i)=0$, and $
\partial_{L_0}(z)\ne 0$.  Exactness of \ref{eq:es} implies $h_a$ maps the subgroup generated by $\{y_1,...,y_n,z\}$ isomorphically onto $\wh{HFK}_{(-1)}(L_+,a)$.  Letting $h$ denote the filtered chain map discussed above, we have $\partial_{L_+}(h(y_i))=h(\partial_{L_0}(y_i))=0$.  It follows that $h(y_i)=h_a(y_i)+h_{<a}(y_i)$ is a cycle with respect to the canceling differential, where $h_{<a}$ denotes the components of $h$ that strictly lower the Alexander grading.  Thus, up to a filtered change of basis, we can assume that $h_a$ maps the subspace spanned by $\{y_1,...,y_n\}$ into a subspace of $\wh{HFK}_{(-1)}(L_+,a)$ in the kernel of $\partial_{L_+}$.  Since the dimension of this subspace is $|L_+|-1$, which agrees with the dimension of $H_{(-1)}(\wh{HKF}(L_+),\partial_{L_+})$, $h_a(z)$ cannot be in the kernel of $\partial_{L_+}$, and the rank of $\partial_{L_+}$ is therefore one, as claimed.  

The subcase when $|L_0|<|L_+|$ is algebraically identical. Indeed, by Equation \ref{eqt:genus}, the Product Formula \ref{PF}, and  the calculation of the knot Floer homology of $B\subset \#^2 S^1\times S^2$, the knot Floer homology of $L_0\#B$ and its canceling differential are isomorphic to that of a link with $|L_+|+1$ components which satisfies the theorem.

\vspace{.07in}

\noindent{\bf Case 2:} When $\sigma_i$ appears once in $\beta$, both $L_-$ and $L_0$ are non-split, and $L_0$ is prime. Letting $a=g(L_+)-1=g(L_-)$ as before, Proposition \ref{top} shows that $\wh{HFK}(L_-,a)$ is again $\mb{F}[0]$, and the inductive hypothesis  yields the same structure for $\wh{HFK}(L_0,a)$ and $\partial_{L_0}$ as in Case 1.  The argument with the skein exact triangle is therefore the same, reducing the proof to the final case.  
\vspace{.07in}

\noindent{\bf Case 3:} When $\sigma_i$ is not in $\beta$, $L_+$ is the connected sum of the positive Hopf link with links $L_1$ and $L_2$.  If either of these links are non-trivial, it contradicts our assumption that $L_+$ is prime, and hence we are reduced to the case of the Hopf link whose knot Floer homology is easily calculated and shown to satisfy Theorem \ref{mainprime} (see \cite[Figure 1]{HR22} for an explicit calculation).
\qed

\vspace{.07in}

\proof[Proof of Corollary \ref{cor:fp}:] By \cite[Theorem 1.2]{Ni23}, the monodromy $\varphi$ is freely isotopic to a diffeomorphism with at most $\mathrm{rank}\wh{HFK}(K,g-1)-1$ fixed points. In the case of prime positive braids, Theorem \ref{main} implies that $\mathrm{rank}\wh{HFK}(K,g-1)=1$ so the number of fixed points must be $0$. 
\qed

 \proof[Proof of Corollary \ref{cor:cable}:] The Alexander polynomial of the $(n,1)$ cable of $K$ is given by $\Delta_K(t^n)$, where $\Delta_K(t)$ is the polynomial of $K$ (see, e.g. \cite[Theorem 6.15]{Lickorish}). When $n\ge 2$,  the coefficient of the next-to-top term of $\Delta_K(t^n)$ is $0$. According to Theorem \ref{main}, if the $(n,1)$ cable were a positive braid closure this coefficient would be a non-zero negative integer unless $p(K_{n,1})=0$; that is, unless the cable of $K$, and hence $K$ itself, is unknotted.
\qed

\vspace{.07in}

We conclude this section by briefly straying outside the realm of positive braids, providing an example of a diagonal knot which is not among them.
\vspace{.05in}

\begin{figure}
\centering
\includegraphics[width=0.8\linewidth]{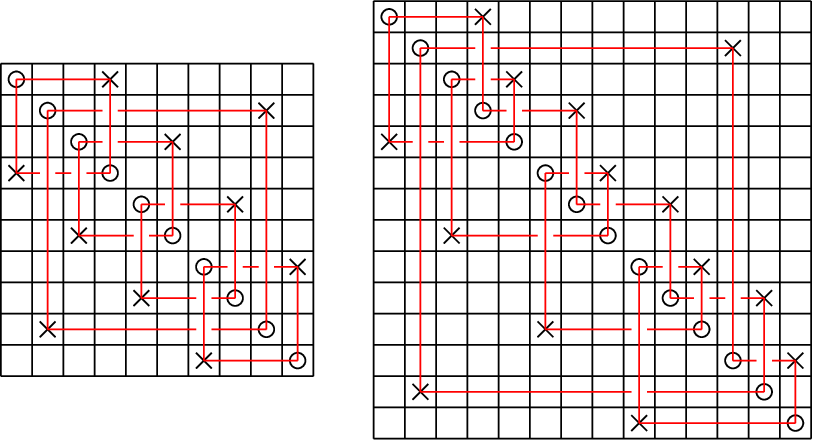}
\caption{The left diagram is a diagonal grid diagram for $R_5$, a chain link of $5$ unknots, from which we obtain $D_5=K_{14n_{5644}}$ on the right by inserting $4$ additional positive crossings. Adding more linked square unknotted components to the left diagram gives rise to $R_n$ for $n>5$, from which insertion of $n-1$ positive crossings produces $D_n$.}
\label{fig:grid}
\end{figure}

\proof[Proof of Proposition \ref{thm:diagonal}:] The diagram on the right in Figure \ref{fig:grid} is clearly diagonal.  By computing its grid homology, one identifies it with $K_{14n_{5644}}$.  Positive braid knots of genus less than six were enumerated in \cite[Section 7]{BLL18}, and the genus $5$ knot in question was not among them (it is, however, a plumbing of positive Hopf bands along a tree, which can be found in \cite[Figure 12]{BLL18}). 
\qed

We remark that the example above does not live in isolation. Indeed, we were led to it by considering a family of diagonal knots $D_n$,  which we expect not to be positive braid knots for $n\ge5$. The knot $D_n$ is obtained from a positive chain ring of  $n$ unknots, which we denote $R_n$, by inserting  $n-1$ positive crossings that connect the components together.   Figure \ref{fig:grid}  illustrates this process for  $n=5$. Considering a crossing in an adjacent pair, we have a skein exact sequence where $L_+=D_n$, $L_-=D_{n-1}$, and $L_0=D_{n-1}\#H$, where $H$ is the positive Hopf link.  In a similar vein, we can fit $R_n$ into a skein exact sequence with $R_{n-1}$ and a connected sum of $n-1$ positive Hopf links (a chain link).   Our inductive calculation of the next-to-top term of positive braids extends to these families,  from which we conclude that all links arising in these skein trees have next-to-top term isomorphic to $\mb{F}^{|L|}[-1]$.  Since the links in question are all diagonal, one should be able to compute this directly from the grid homology complex, as in \cite[Theorem 1.9]{kubota25}.

\section{Related Questions and Future Directions}\label{sec4}
Theorem \ref{main} sits at the nexus of surface dynamics, diagrammatic knot theory, Floer homology, and categorification, and motivates a number of questions in these avenues. 

For instance, while Corollary \ref{cor:fp} shows that prime positive braid closures have fixed-point free monodromies, the circuitous route through knot Floer homology and its relationship to symplectic Floer homology leaves opaque any geometric underpinning of this behavior. It is natural to ask:
\begin{question} Can one show that prime positive braid monodromies are fixed point free by direct examination?
\end{question}
\noindent Indeed, as their fiber surfaces are obtained by a straightforward plumbing of Hopf bands it is conceivable that one could offer a more intuitive explanation of their lack of fixed points.  Doing so might  lead to progress on other natural questions:
\begin{question} Do positive braid closures naturally fit within a broader class of fibered links with fixed point free monodromy? Where else do fixed point free monodromies arise ``in nature"?
\end{question}
\noindent A natural family, suggested by our  study of the knots $D_n$, are prime knots obtained by plumbing positive Hopf bands along a tree.  More generally, our inductive approach to the Floer homology of positive braids should extend Theorem \ref{main} to the family of links obtained by plumbing positive Hopf bands along a forest. 

One could also pursue extending our calculations to fibered links outside the realm of the 3-sphere.  For instance, the symplectic Floer homology of a natural class of closed surface automorphisms was considered by Eftekhary in \cite{Eftekhary} (cf. \cite{MR1426539,MR2039162}). It would be interesting to study the    analogous automorphisms of surfaces with boundary from the perspective of knot Floer homology.
\vskip0.05in

In this article we have only considered the {\em knot} Floer homology of a link, which categorifies the single variable Alexander polynomial.  The {\em link} Floer homology of a link categorifies the multi-variable Alexander polynomial, and determines the  classes in $H^1(Y\setminus L)$ which are realized by fibrations \cite{Altman}.   It would be interesting to extend results on fixed points to this situation:
\begin{problem} Bound the number of fixed points of {\em every} fibration of a link exterior using link Floer homology. 
\end{problem}
\noindent To this end, a relative homology class can be used to collapse the Alexander multi-grading on link Floer homology to a $\mathbb{Z}$-grading. It would be natural to expect the number of fixed points of the monodromy of a fibration of the link exterior to be bounded above by rank of the next-to-top group, computed with respect to the grading induced by the class of the fiber.  Assuming this is true, one could ask whether other fibrations on the complement of a positive braid link share similar dynamics to those demonstrated for Stalling's fibration by Theorem \ref{main}.

There are many well-studied notions of positivity for knots and links, and being a positive braid closure is in many regards the most stringent such notion one could employ.  For instance they are certainly {\em positive links}, in the sense that they admit diagrams with only positive crossings, and the latter are strictly contained in the family of {\em strongly quasipositive links} \cite{RudolphPSQP}.  One could speculate  what shadow of these features is cast on the knot Floer homology of fibered links, and the implications this might have for the dynamics of their monodromies.  While the next-to-top Floer homology of knots in these classes can be quite large, our calculations suggest these positivities may, like the positive braid condition, constrain the subgroup in Maslov grading $-1$.

\begin{question} For a prime fibered positive knot $K$, is \[\wh{HFK}_{(-1)}(K,g-1)\cong\mb{F}?\] If so, does this extend to prime strongly quasipositive fibered knots?
\end{question}

\noindent An affirmative answer for positive knots would imply that the number of prime factors of a fibered positive knot is equal to the rank of  $\wh{HFK}_{(-1)}(K,g-1)$ (this follows from the Product Formula \ref{PF} and the visual primeness of positive knots \cite{Ozawa}).  A further extension to strongly quasipositive fibered knots might suggest a connection between the Maslov grading $-1$ subgroup and contact geometry.  We note the above constraint is satisfied by prime fibered strongly quasipositive knots with up to 15 crossings.  

One could also attempt to employ our methods to study the knot Floer homology of positive braids beyond the top two Alexander gradings.  Kubota showed that the Floer homology of a diagonal knot in Alexander grading $g-2$ is supported in Maslov grading $-1$ unless it is a $(2,n)$ torus knot in \cite{kubota25}. It seems reasonable to ask whether the same holds for positive braids:
\begin{question}
For a prime positive braid knot $K$ that is not a $(2,n)$ torus knot, is \[\wh{HFK}(K,g-2)\cong\mb{F}^m[-1]?\] 
\end{question}

\noindent Pursuing such an understanding is motivated by the following
\begin{problem}
     Better understand the relationship between  $\wh{HFK}(K,g-2)$ and dynamics of the monodromy of $K$.
\end{problem}
   Indeed, by way of  Ozsv{\'a}th-Szab{\'o}'s surgery formula for knot Floer homology \cite{MR2377279}, work of Lee-Taubes \cite{LeeTaubes}, and Kutluhan-Lee-Taubes \cite{KLTI,KLTII,KLTIII,KLTIV,KLTV} cf. \cite{CGHI,CGHII,CGHIII}, this knot Floer homology group is closely related to the  periodic Floer homology group  generated by 2-periodic orbits of the capped-off monodromy defined by Hutchings \cite{MR1941088}.   
\bibliographystyle{alpha}
\bibliography{references}

\end{document}